\newtheorem{proposition}{Proposition}
\newtheorem{theorem}[proposition]{Theorem}
\newtheorem{corollary}[proposition]{Corollary}
\theoremstyle{definition}  
\newtheorem{definition}[proposition]{Definition}
\newtheorem{example}[proposition]{Example}
\newtheorem{remark}[proposition]{Remark}
\newcommand{\thmref}[1]{Theorem~\ref{#1}}
\newcommand{\propref}[1]{Proposition~\ref{#1}}
\newcommand{\cororef}[1]{Corollary~\ref{#1}}
\newcommand{\secat}{{\rm secat}\,}
\newcommand{\cat}{{\rm cat}\,}
\newcommand{\hicat}[1]{{\rm cat}_{#1}\,}
\newcommand{\relcat}{{\rm relcat}\,}
\newcommand{\relcatk}{{\rm relcat}_k\,}
\newcommand{\hirelcat}[1]{{\rm relcat}_{#1}\,}
\newcommand{\id}{{\rm id}}
\renewcommand{\leq}{\leqslant}
\renewcommand{\geq}{\geqslant}
\newcommand{\ent}[1]{\lfloor #1 \rfloor}
\title{About sectional category of the Ganea maps}
\author{Jean-Paul Doeraene}
\subjclass[2010]{55M30}  
\keywords{Ganea fibration, sectional category, relative category.} 
\begin{document}

\begin{abstract}
We first compute the James' sectional category (secat) of the Ganea map $g_k$ of any map $\iota_X$ in terms of the sectional category of $\iota_X$: We show that $\secat g_k$ is the integer part of $\secat \iota_X / (k+1)$. Next we compute the relative category (relcat) of $g_k$. In order to do this, we introduce the relative category of order $k$ ($\mbox{relcat}_k$) of a map and show that $\relcat g_k$ is the integer part of $\relcatk \iota_X/(k+1)$. Then we establish some inequalities linking secat and relcat of any order: We show that 
$\secat \iota_X \leq \relcatk \iota_X \leq \secat \iota_X + k + 1$
and
$\relcatk \iota_X \leq \hirelcat{k+1} \iota_X \leq \relcatk \iota_X + 1$.
We give examples that show that these inequalities may be strict.
\end{abstract}

\maketitle


In order to compute the `Lusternik-Schnirelmann category' $\cat X$  of a space $X$, Ganea \cite{Gan67} associates  a fibre-cofibre construction  to $X$, more precisely a sequence of fibrations $p_n(X) \colon E_n \to X$ for $n \geq 0$. This invariant for spaces is in some sense extended to maps by the notion of `sectional category' ($\secat$ for short) of a fibration $f$, originally defined by Swarz \cite{Sva66}. There is also a Ganea-type sequence of fibrations $p_n(f)$ associated to $f$ to compute its sectional category. Actually, the  LS-category of $X$ is the sectional category of the path fibration $PX \to X$, so the LS-category is a particular case of sectional category. One can also  define the sectional category of any map as the sectional category of any equivalent fibration; and, in the same way, the sequence of fibrations $p_n$ above can be replaced by a sequence of maps $g_n$, defined up to homotopy. As a particular case, the sectional category of the diagonal map $\Delta\colon X \to X\times X$ is the topological complexity of $X$ defined by Farber \cite{Far03}.

In this paper, we first show that the sectional category of the $n^{\mbox{th}}$ Ganea map $g_n(X)$ of $X$  is the integer part of $\cat X /(n+1)$. More generally, the sectional category of the  Ganea map $g_n(\iota_X)$  associated to any map $\iota_X$ is the integer part of $\secat \iota_X/(n+1)$.

As we may `think of' the sectional category as the degree of obstruction for a map to have a homotopy section, this shows us how this degree of obstruction decreases when we consider the successive Ganea maps. For instance, for a space $X$ with $\cat X = 7$, the successive values of $\secat(g_n(X))$ for $0 \leq n \leq 7$ are
$$7 \quad 3 \quad 2 \quad 1 \quad 1  \quad 1 \quad 1 \quad 0.$$

In \cite{DoeHa13}, we used the same Ganea-type construction to define the `relative category' of a map ($\relcat$ for short). It turns out that the relative category can differ from the sectional category by at most one. More precisely, we have $$\secat \iota_X \leq \relcat \iota_X \leq \secat \iota_X +1.$$ This establishes a dichotomy between maps: those for which the sectional category equals the relative category, and those for which they differ by 1. As a particular case, the relative category of the diagonal map $\Delta\colon X \to X\times X$ is the monoidal topological complexity of $X$ defined in \cite{IwaSak10}.

\smallskip
In this paper we introduce the `relative category of order $k$' ($\mbox{relcat}_k$), and show that the relative category of the  $n^{\mbox{th}}$ Ganea map $g_n(\iota_X)$ associated to a map $\iota_X$ is the integer part of $\hirelcat{n} \iota_X/(n+1)$. 

When $\iota_X \colon \ast \to X$, we write $\relcatk \iota_X = \hicat{k} X$. 

We link all these invariants together by several inequalities: $$\secat \iota_X \leq \relcatk \iota_X \leq \secat \iota_X + k + 1$$ and
$$\relcatk \iota_X \leq \hirelcat{k+1} \iota_X \leq \relcatk \iota_X + 1.$$

Finally, we show that, with some hypothesis on the connexity of $\iota_X$ and the homotopical dimension of the source of $g_n(\iota_X)$,  $\hirelcat{k} \iota_X = \secat \iota_X$ for all $k \leq n$.

For a given space $X$ (respectively: map $\iota_X$), the set of integers $k$ for which the equality
$\hicat{k+1} X = \hicat{k} X$  (respectively: $\hirelcat{k+1} \iota_X = \relcatk \iota_X$) holds is an interesting data of this space (respectively: map).
There are at most  as many such integers as $\cat X$ (respectively: $\relcat \iota_X$).
For instance for $X = K(\mathbb{Q}, 1)$, there is just one such $k$, which is 0, namely:
$$\hicat{0} X = \hicat{1} X = 2\quad\mbox{and}\quad \hicat{k} X = k+1\mbox{ for }k > 1.$$


\section{Sectional category of the Ganea maps}

We use the symbol $\simeq$  both to mean that maps are homotopic, or that spaces are of the same homotopy type.
We denote the integer part of a rational number $q$ by $\ent{q}$.

We build all our spaces and maps with `homotopy commutative diagrams', especially `homotopy pullbacks' and `homotopy pushouts', in the spirit of \cite{Str11}.

\smallskip
Recall the following construction:
\begin{definition}\label{ganea}
For any map $\iota_X\colon A \to X$,
the \emph{Ganea construction} of $\iota_X$
is the following sequence of homotopy commutative diagrams ($i \geq 0$):
$$\xymatrix{
&A\ar[dr]_{\alpha_{i+1}}\ar@/^/[rrrd]^{\iota_X}\\
F_{i}\ar[rd]_{\beta_i}\ar[ur]^{\eta_{i}}&&G_{i+1}\ar@{-->}[rr]|-(.35){g_{i+1}}&&X\\
&G_{i}\ar[ru]^{\gamma_{i}}\ar@/_/[rrru]_{g_{i}}
}$$
where the outside square is a homotopy pullback, the inside square is a homotopy pushout
and the map $g_{i+1} = (g_i,\iota_X) \colon G_{i+1} \to X$
is the whisker map induced by this homotopy pushout.
The iteration starts with  $g_0 = \iota_X \colon A \to X$.
\end{definition}

In other words, the map $g_{i+1}$ is the join of $g_i$ and $\iota_X$ over X, namely $g_{i+1} \simeq g_i \bowtie_X \iota_X$.
When we need to be precise, we denote $G_i$ by $G_i(\iota_X)$ and $g_i$ by $g_i(\iota_X)$. If $A \simeq *$, we also write $G_i(X)$ and $g_i(X)$ respectively.

For coherence, let $\alpha_0 = \id_A$. For any $i \geq 0$, there is a whisker map $\theta_i = (\id_A, \alpha_i)\colon A \to F_i$ induced by the homotopy pullback. Thus $\theta_i$ is a homotopy section of  $\eta_i$. Moreover we have $\gamma_i \circ \alpha_i \simeq \alpha_{i+1}$.

\begin{proposition}For any map $\iota_X\colon A\to X$, we have
$$g_j(g_i(\iota_X)) \simeq g_{ij+i+j}(\iota_X).$$
\end{proposition}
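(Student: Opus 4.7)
The plan is to identify $g_n(\iota_X)$ with an iterated fiberwise join of $\iota_X$ with itself and then invoke associativity of the fiberwise join $\bowtie_X$. The inductive step of \defref{ganea} can be restated as $g_{i+1}(\iota_X) \simeq g_i(\iota_X) \bowtie_X \iota_X$, with starting point $g_0(\iota_X) = \iota_X$. A straightforward induction on $n$ therefore gives
\[
g_n(\iota_X) \;\simeq\; \underbrace{\iota_X \bowtie_X \iota_X \bowtie_X \cdots \bowtie_X \iota_X}_{n+1 \text{ factors}},
\]
the bracketing being immaterial provided $\bowtie_X$ is associative up to homotopy over $X$.

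Granted this identification, I apply the same formula with $\iota_X$ replaced by $g_i(\iota_X)$, obtaining $g_j(g_i(\iota_X))$ as a $(j+1)$-fold fiberwise join (over $X$) of copies of $g_i(\iota_X)$, each of which is itself an $(i+1)$-fold fiberwise join of $\iota_X$. Using associativity to flatten the double join yields an $(i+1)(j+1)$-fold fiberwise join of $\iota_X$, and since $(i+1)(j+1) - 1 = ij + i + j$ this is precisely $g_{ij+i+j}(\iota_X)$, as required.

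The main obstacle is the associativity of $\bowtie_X$, i.e. the claim $(f \bowtie_X g) \bowtie_X h \simeq f \bowtie_X (g \bowtie_X h)$ over $X$. This is a Fubini-type statement for an interleaved pullback--pushout construction, and is the one nontrivial ingredient in the argument; it can be established by comparing the two iterated constructions to a common `triple join' built from the pullback of the three maps into $X$ followed by the join of three spaces, or equivalently by noting that both sides classify the same homotopy type of maps $Y \to X$ (fiberwise joins of fibers of $f$, $g$, $h$). Once associativity is in hand the proposition is an immediate bookkeeping step, so the proof reduces to establishing, or citing from \cite{Str11, DoeHa13}, this single coherence property of the fiberwise join.
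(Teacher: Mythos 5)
Your proof is correct and follows essentially the same route as the paper: both identify $g_n(\iota_X)$ with the $(n+1)$-fold fiberwise join of $\iota_X$ over $X$ and then flatten the double join via associativity, which the paper likewise treats as the single external ingredient (citing Theorem~4.8 of Doeraene's earlier work rather than the references you suggest). No gaps.
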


\begin{proof}This is just an application of the `associativity of the join' (see \cite{Doe98}, Theorem 4.8 for instance):
\begin{align*}
g_j(g_i(\iota_X)) &\simeq g_i(\iota_X) \bowtie_X \dots \bowtie_X g_i(\iota_X) \quad\mbox {($j+1$ times)}\\
        &\simeq (\iota_X \bowtie_X \dots \bowtie_X \iota_X) \dots  (\iota_X \bowtie_X \dots \bowtie_X \iota_X)\\
        &\simeq \iota_X \bowtie_X \dots \bowtie_X \iota_X \quad\mbox {($(j+1)(i+1)$ times)}\\
        &\simeq g_{(j+1)(i+1)-1}(\iota_X)
\end{align*}
\end{proof}

\begin{definition}\label{LSganea}
Let $\iota_X\colon A \to X$ be any map. 

1) The \emph{sectional category} of $\iota_X$ is the least integer $n$ such that the map $g_n\colon G_n(\iota_X)\to X$ has a homotopy section, i.e. there exists a map $\sigma\colon X \to G_n(\iota_X)$ such that $g_n \circ \sigma \simeq \id_X$.

2) The \emph{relative category} of $\iota_X$ is the least integer $n$ such that the map $g_n\colon G_n(\iota_X)\to X$ has a homotopy section $\sigma$ and $\sigma \circ \iota_X \simeq \alpha_n$.
\end{definition}

We denote the sectional category  by $\secat(\iota_X)$, and the relative category by $\relcat(\iota_X)$.
If $A = \ast$,  $\secat(\iota_X) = \relcat(\iota_X)$ and is denoted simply by $\cat(X)$; this is the `normalized' version of the Lusternik-Schnirelmann category. 

A lot about the integers cat and secat is collected in \cite{CLOT03}. The integer relcat is introduced in \cite{DoeHa13}, and further studied in \cite{DoeHa13-2} and \cite{CarGarVan14}.  
\smallskip

\begin{proposition}\label{secatgk}For any map $\iota_X\colon A \to X$, we have:
$$\secat g_k(\iota_X) = \big\lfloor{\frac{\secat \iota_X}{k+1}}\big\rfloor$$
\end{proposition}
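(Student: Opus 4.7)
The plan is to reduce the statement to a counting exercise using the composition identity from the previous proposition. Set $n = \secat \iota_X$. By that proposition,
$$g_s(g_k(\iota_X)) \simeq g_{(s+1)(k+1)-1}(\iota_X)$$
for every $s \geq 0$, so computing $\secat g_k(\iota_X)$ amounts to finding the smallest $s$ for which $g_{(s+1)(k+1)-1}(\iota_X)$ admits a homotopy section.

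The next step is to establish a monotonicity principle: if $g_m(\iota_X)$ admits a homotopy section, so does $g_{m'}(\iota_X)$ for every $m' \geq m$. This follows directly from the Ganea construction, since $\gamma_m \colon G_m \to G_{m+1}$ satisfies $g_{m+1}\circ\gamma_m \simeq g_m$; hence composing any section of $g_m$ with $\gamma_m$ yields a section of $g_{m+1}$, and the claim iterates. Consequently $g_m(\iota_X)$ admits a section if and only if $m \geq n$, and the problem reduces to finding the smallest $s \geq 0$ with
$$(s+1)(k+1) - 1 \geq n, \quad \mbox{i.e.,} \quad (s+1)(k+1) \geq n+1.$$

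The last step is routine arithmetic. Writing $n = q(k+1) + r$ with $0 \leq r \leq k$, so that $q = \ent{n/(k+1)}$, one checks that $s = q$ satisfies $(s+1)(k+1) = (q+1)(k+1) \geq q(k+1)+r+1 = n+1$, while $s = q-1$ gives $(s+1)(k+1) = q(k+1) \leq n$, so $q(k+1)-1 < n$ and the corresponding Ganea map has no section. Hence the minimal $s$ is exactly $q = \ent{\secat \iota_X/(k+1)}$.

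I do not foresee a serious obstacle: the heavy lifting (associativity of the join) was carried out in the previous proposition, and the monotonicity of sections along the Ganea tower is essentially built into the construction. The only point that requires a little care is the boundary case $r = 0$ versus $r > 0$ in the arithmetic step, but both cases yield the same answer $s = q$. The argument as stated also handles $\secat \iota_X = \infty$ automatically, since then no $g_m(\iota_X)$ has a section and $\secat g_k(\iota_X) = \infty = \ent{\infty/(k+1)}$.
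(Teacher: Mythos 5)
Your proof is correct and takes essentially the same route as the paper: reduce via $g_s(g_k(\iota_X)) \simeq g_{(s+1)(k+1)-1}(\iota_X)$ from the preceding proposition, then solve the resulting arithmetic inequality. The only difference is that you spell out the monotonicity of sections along the Ganea tower (composing a section of $g_m$ with $\gamma_m$), a step the paper leaves implicit.
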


\begin{proof}By definition, $\secat g_k(\iota_X)$ is the least integer $n$ such that $g_n(g_k(\iota_X))$, i.e. $g_{kn+k+n}(\iota_X)$, has a homotopy section.  Thus, if $\secat \iota_X = m$, $n$ will be such $kn+k+n \geq m$ and $k(n-1)+k+(n-1) < m$, that is $n \geq \frac{m}{k+1} - \frac{k}{k+1}$ and $n < \frac{m}{k+1} +\frac{1}{k+1}$, so $n = \ent{\frac{m}{k+1}}$.
\end{proof}


\section{Higher relative category}

For any map $\iota_X\colon A \to X$ and two integers $0 \leq k < i$, consider the following homotopy commutative diagram
$$\xymatrix{
&G_k\ar[dr]_{\gamma_{k,i}}\ar@/^/[rrrd]^{g_k}\\
H_{i-k-1}^k\ar[rd]\ar[ur]&&G_{i}\ar@{-->}[rr]|-(.35){g_{i}}&&X\\
&G_{i-k-1}\ar[ru]\ar@/_/[rrru]_{g_{i-k-1}}
}$$
where the outside square is a homotopy pullback, the inside square is a homotopy pushout. 

Because of the associativity of the join, we also have $\gamma_{k,i} \simeq \gamma_{i-1} \circ \gamma_{i-2} \circ \dots \circ \gamma_{k+1} \circ \gamma_k$. For coherence, let $\gamma_{k,k} = \id_{G_k}$. 

\begin{definition}
Let $\iota_X\colon A \to X$ be any map. 
The \emph{relative category of order $k$} of $\iota_X$ is the least integer $n \geq k$ such that the map $g_n\colon G_n(\iota_X)\to X$ has a homotopy section $\sigma$ and $\sigma \circ  g_k \simeq \gamma_{k,n}$.
\end{definition}

We denote this integer by $\relcatk \iota_X$. According to the convention to avoid the prefix `rel' when $A \simeq *$, we write $\hicat{k} X = \relcatk \iota_X$ in this case.

\begin{remark}
Notice that $\hirelcat{0} \iota_X = \relcat \iota_X$ and that, clearly, $k \leq \hirelcat{k} \iota_X   \leq \hirelcat{k+1} \iota_X$ for any $k$.  Also notice that $\relcatk \iota_X = k$ if and only if $g_k(\iota_X)$ is a homotopy equivalence. In particular, $\hicat{k} \ast = k$ for any $k$.
\end{remark}

Following the same reasoning as in Proposition \ref{secatgk}, we have:
\begin{proposition}\label{relcatgk}For any map $\iota_X\colon A \to X$, we have:
$$\relcat g_k(\iota_X) = \big\lfloor{\frac{\relcatk \iota_X}{k+1}}\big\rfloor$$
\end{proposition}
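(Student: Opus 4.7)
The plan is to follow the template of \propref{secatgk}, replacing the existence of a mere homotopy section by one satisfying the compatibility $\sigma \circ g_k(\iota_X) \simeq \alpha_n(g_k(\iota_X))$ required by the definition of $\relcat$. By the first Proposition of the paper we already know $g_n(g_k(\iota_X)) \simeq g_{kn+k+n}(\iota_X)$, so everything reduces to translating the two compatibility conditions into one another across this equivalence.

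The main obstacle, and the step that needs the most care, is to identify the map $\alpha_n(g_k(\iota_X)) \colon G_k(\iota_X) \to G_n(g_k(\iota_X))$ with $\gamma_{k,\,kn+k+n} \colon G_k(\iota_X) \to G_{kn+k+n}(\iota_X)$ under the above equivalence. Both arise from the iterated-join interpretation of the Ganea construction: $\alpha_n(g_k(\iota_X))$ is an iterated composition of $\gamma$'s inside the $g_k(\iota_X)$-Ganea tower, and these correspond, via the associativity of the join that underlies the first Proposition, to the successive maps $\gamma_i$ in the $\iota_X$-Ganea tower starting at $G_k$ and climbing up to $G_{kn+k+n}$. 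I expect this to be essentially a refinement of the first Proposition that keeps track of the source inclusion of $G_k$ in addition to the target map down to $X$.

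Granted this identification, $\relcat g_k(\iota_X) \leq n$ is equivalent to the existence of a homotopy section $\sigma$ of $g_{kn+k+n}(\iota_X)$ with $\sigma \circ g_k \simeq \gamma_{k,\,kn+k+n}$, i.e.\ to $\relcatk \iota_X \leq kn+k+n$. One small additional observation is that $\{\,m \geq k : \relcatk \iota_X \leq m\,\}$ is upward-closed: a section $\sigma$ witnessing the inequality at level $m$ yields $\gamma_{m,m'} \circ \sigma$ at level $m' \geq m$, using $\gamma_{m,m'} \circ \gamma_{k,m} \simeq \gamma_{k,m'}$ (itself a consequence of the associativity of the join). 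Setting $m = \relcatk \iota_X$, the least $n$ with $kn+k+n \geq m$ is $\lfloor m/(k+1)\rfloor$, by the exact same arithmetic already performed in \propref{secatgk}.
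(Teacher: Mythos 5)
Your proposal is correct and follows exactly the route the paper intends: the paper's "proof" is simply the remark "following the same reasoning as in Proposition~\ref{secatgk}", and you supply the two points that reasoning tacitly relies on, namely the identification of $\alpha_n(g_k(\iota_X))$ with $\gamma_{k,kn+k+n}$ under $g_n(g_k(\iota_X)) \simeq g_{kn+k+n}(\iota_X)$, and the upward-closedness of $\{m : \relcatk \iota_X \leq m\}$. Your write-up is in fact more complete than the paper's.
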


\begin{proposition}\label{secatplus}For any map $\iota_X\colon A \to X$, any $k$, we have:
$$\secat \iota_X \leq \relcatk \iota_X \leq \secat \iota_X + k + 1.$$
\end{proposition}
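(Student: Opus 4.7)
The first inequality $\secat \iota_X \leq \relcatk \iota_X$ is essentially tautological: any homotopy section witnessing $\relcatk \iota_X \leq n$ is in particular a homotopy section of $g_n$, so it also witnesses $\secat \iota_X \leq n$.

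For the second inequality, set $m = \secat \iota_X$ and let $\sigma\colon X \to G_m$ be a homotopy section of $g_m$. The plan is to take $\sigma' := \gamma_{m,m+k+1} \circ \sigma \colon X \to G_{m+k+1}$ and verify both conditions in the definition of $\relcatk$. The section property $g_{m+k+1} \circ \sigma' \simeq \id_X$ is immediate from $g_{m+k+1} \circ \gamma_{m,m+k+1} \simeq g_m$. The real content is checking the compatibility $\sigma' \circ g_k \simeq \gamma_{k,m+k+1}$. This rests on the two-step decomposition $g_{m+k+1} \simeq g_m \bowtie_X g_k$, a direct consequence of the associativity of the join (joining $m+1$ and $k+1$ copies of $\iota_X$ yields $m+k+2$ copies). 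It exhibits $G_{m+k+1}$ as the homotopy pushout of $G_m$ and $G_k$ along their homotopy pullback $H$ over $X$, with pushout inclusions agreeing, up to homotopy, with the iteratively defined $\gamma_{m,m+k+1}$ and $\gamma_{k,m+k+1}$ of the Ganea tower.

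To exploit this, I observe that $\id_{G_k}\colon G_k \to G_k$ and $\sigma \circ g_k\colon G_k \to G_m$ agree over $X$ (both project to $g_k$), so the homotopy pullback $H$ produces a whisker map $\tau\colon G_k \to H$ that is a homotopy section of $H \to G_k$ and satisfies $(H \to G_m) \circ \tau \simeq \sigma \circ g_k$. Precomposing the homotopy commutative pushout square with $\tau$ then yields
$$\sigma' \circ g_k = \gamma_{m,m+k+1} \circ \sigma \circ g_k \simeq \gamma_{k,m+k+1} \circ (H \to G_k) \circ \tau = \gamma_{k,m+k+1},$$
as required, giving $\relcatk \iota_X \leq m + k + 1$.

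The main obstacle, if any, is the bookkeeping step identifying the inclusions of the one-step pushout decomposition of $g_m \bowtie_X g_k$ with the iteratively built maps $\gamma_{m,m+k+1}$ and $\gamma_{k,m+k+1}$; this is precisely the associativity statement for $\gamma_{k,i}$ already recorded in the paper and used in the proof of Proposition~\ref{secatgk}.
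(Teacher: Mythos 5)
Your proof is correct and follows essentially the same route as the paper: both use the join decomposition of $g_{m+k+1}$ as the homotopy pushout of $G_m$ and $G_k$ over their homotopy pullback $H_m^k$, take $\sigma^+ = \gamma_{m,m+k+1}\circ\sigma$, and verify the compatibility $\sigma^+\circ g_k \simeq \gamma_{k,m+k+1}$ by producing a section of $H_m^k \to G_k$ lying over $\sigma\circ g_k$ (the paper gets it from a second pullback and the Prism lemma, you get the same map directly as the whisker map $(\sigma\circ g_k,\id_{G_k})$). The difference is only in presentation, not in substance.
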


\begin{proof}Only the second inequality needs a proof. Let $n = \secat \iota_X$ et let $\sigma$ be a homotopy section of $g_n$. Consider the following homotopy commutative diagram:
$$\xymatrix{
G_k\ar[dd]_{g_k}\ar[rr]^{\sigma'}&& H_n^k\ar[dd]\ar[rr]^{g'} && G_k\ar[dd]^{g_k}\ar[ld]_{\gamma_{k,n+k+1}}\\
&&&G_{n+k+1}\ar@{-->}[rd]^{g_{n+k+1}}\\
X\ar[rr]_\sigma&&G_n\ar[rr]_{g_n}\ar[ru]_{\gamma_{n,n+k+1}}&&X
}$$
where the two squares are homotopy pullbacks. We have $g' \circ \sigma' \simeq \id_{G_k}$ by the Prism lemma (see \cite{Doe98}, Lemma 1.3 for instance). The map $\sigma^+ = \gamma_{n,n+k+1} \circ \sigma$ is a homotopy section of $g_{n+k+1}$ and, moreover, $\sigma^+ \circ g_k \simeq \gamma_{k,n+k+1} \circ g' \circ \sigma' \circ \gamma_{k,n+k+1}$. So $\relcatk \iota_X \leq n+k+1$.
\end{proof}

\begin{theorem}\label{relcatkplus1}
For any map $\iota_X\colon A \to X$, any $k$, we have:
$$\relcatk \iota_X \leq \hirelcat{k+1} \iota_X \leq \relcatk \iota_X + 1.$$
\end{theorem}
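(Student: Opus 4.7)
The plan is to prove the two inequalities separately, each by directly manipulating homotopy sections. For the left-hand inequality, let $n = \hirelcat{k+1} \iota_X$ be realised by a section $\sigma$ of $g_n$ satisfying $\sigma \circ g_{k+1} \simeq \gamma_{k+1,n}$. I would precompose with $\gamma_k \colon G_k \to G_{k+1}$: since $g_{k+1} \circ \gamma_k \simeq g_k$ (because $g_{k+1}$ is the whisker map out of the pushout defining $G_{k+1}$) and $\gamma_{k+1,n} \circ \gamma_k \simeq \gamma_{k,n}$ by the associativity of the join noted after \defref{ganea}, one immediately gets $\sigma \circ g_k \simeq \gamma_{k,n}$, so the same $\sigma$ witnesses $\relcatk \iota_X \leq n$.

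For the right-hand inequality, set $n = \relcatk \iota_X$ and choose a section $\sigma$ of $g_n$ with $\sigma \circ g_k \simeq \gamma_{k,n}$. The natural candidate one level up is $\tilde\sigma = \gamma_n \circ \sigma \colon X \to G_{n+1}$, which is a section of $g_{n+1}$ since $g_{n+1} \circ \gamma_n \simeq g_n$. What remains is the compatibility $\tilde\sigma \circ g_{k+1} \simeq \gamma_{k+1,n+1}$. Both sides are maps out of the homotopy pushout $G_{k+1}$, so I would invoke the universal property and check agreement on its two legs. On $G_k$, the hypothesis yields $\tilde\sigma \circ g_{k+1} \circ \gamma_k \simeq \gamma_n \circ \sigma \circ g_k \simeq \gamma_n \circ \gamma_{k,n} \simeq \gamma_{k+1,n+1} \circ \gamma_k$. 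On $A$, composing the hypothesis with $\alpha_k$ and using $g_k \circ \alpha_k \simeq \iota_X$ together with $\gamma_{k,n} \circ \alpha_k \simeq \alpha_n$ gives the auxiliary identity $\sigma \circ \iota_X \simeq \alpha_n$; then $\tilde\sigma \circ g_{k+1} \circ \alpha_{k+1} \simeq \gamma_n \circ \sigma \circ \iota_X \simeq \gamma_n \circ \alpha_n \simeq \alpha_{n+1} \simeq \gamma_{k+1,n+1} \circ \alpha_{k+1}$.

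The main obstacle I anticipate is verifying that the two homotopies exhibited on the legs $G_k$ and $A$ actually glue along $F_k$, so that the universal property of the homotopy pushout yields a single homotopy between the two maps $G_{k+1} \to G_{n+1}$. I expect this coherence to be essentially automatic, since all homotopies in play come from the canonical associativity of the join and the structural identities $\gamma_i \circ \alpha_i \simeq \alpha_{i+1}$ recorded in the construction. If handling the glueing directly proves delicate, a fallback is to mimic the proof of \propref{secatplus}: assemble a diagram of two homotopy pullbacks over $X$ involving $g_k$, $g_n$ and $g_{n+k+1}$-type constructions, and extract a section with the stronger compatibility using the Prism lemma, thereby bypassing the explicit check on $F_k$.
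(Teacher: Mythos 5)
Your proposal is correct and follows essentially the same route as the paper: the paper likewise takes $\sigma^+ = \gamma_n \circ \sigma$ and establishes $\sigma^+ \circ g_{k+1} \simeq \gamma_{k+1,n+1}$ through the homotopy pushout defining $G_{k+1}$. The glueing along $F_k$ that you flag is a real issue (two maps out of a homotopy pushout agreeing up to homotopy on each leg need not be homotopic), and it is exactly what the paper's diagram discharges: from $\sigma \circ \iota_X \simeq \sigma \circ g_k \circ \alpha_k \simeq \alpha_n$ one gets a whisker map $\bar\sigma \colon A \to F_n$ and hence a coherent map of the entire pushout square $(A \leftarrow F_k \to G_k)$ into $(A \leftarrow F_n \to G_n)$, whose single induced map on pushouts is identified with both $\sigma^+ \circ g_{k+1}$ and $\gamma_{k+1,n+1}$ — so the coherence is built into the construction from the start rather than checked leg by leg afterwards, which is the rigorous version of your first (not your fallback) argument.
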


\begin{proof}Only the second inequality needs a proof.  Let $n= \relcatk \iota_X$ et let $\sigma$ be a homotopy section of $g_n$ such that $\sigma \circ g_k \simeq \gamma_{k,n}$. Consider the following homotopy commutative diagram:
$$\xymatrix{
&&&F_k\ar[llldd]\ar[ddd]|(.64)\hole\ar[rdd]\ar[rrrrdd]\\
\\
A\ar[rrrr]^{\bar\sigma}\ar[rd]\ar[ddd]&&&&F_n\ar[ddd]\ar[rrr]^{\eta_n}&&&A\ar[lld]\ar[ldd]\ar[ddd]\\
&G_{k+1}\ar@{-->}[ldd]\ar@/_2pc/@{=}[rrrr] &&G_k\ar[ll]\ar[rdd]|(.3)\hole\ar[llldd]|(.25)\hole\ar[rrrrdd]|(.24)\hole|(.62)\hole\ar[rr]|(.47)\hole &&G_{k+1}\ar@{-->}[rd]\\
&&&&&&G_{n+1}\ar@{-->}[rd]\\
X\ar[rrrr]_\sigma&&&&G_n\ar[rru]_{\gamma_n}\ar[rrr]_{g_n}&&&X
}$$
The map $\sigma^+ = \gamma_n \circ \sigma$ is a homotopy section of $g_{n+1}$ and $\sigma^+ \circ g_{k+1} \simeq \gamma_{k+1,n+1}$, so $\hirelcat{k+1} \iota_X \leq n+1$. 
\end{proof}

\begin{corollary}\label{relcatplusk}
For any map $\iota_X\colon A \to X$, any $k$, we have:
$$\relcat \iota_X \leq \relcatk \iota_X \leq \relcat \iota_X + k.$$
\end{corollary}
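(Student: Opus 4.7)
The plan is to observe that this corollary is just the iterated version of \thmref{relcatkplus1}, obtained by a trivial induction on $k$. Once we notice that $\relcat \iota_X$ is by definition the same as $\hirelcat{0} \iota_X$, both inequalities become telescoping chains built from the two halves of \thmref{relcatkplus1}.

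For the left inequality, I would argue by induction on $k$. The base case $k=0$ is the tautology $\hirelcat{0} \iota_X \leq \hirelcat{0} \iota_X$. For the inductive step, assuming $\hirelcat{0} \iota_X \leq \hirelcat{k} \iota_X$, the left half of \thmref{relcatkplus1} (or equivalently the monotonicity $\hirelcat{k} \iota_X \leq \hirelcat{k+1} \iota_X$ noted in the Remark) gives $\hirelcat{k} \iota_X \leq \hirelcat{k+1} \iota_X$, and combining these yields $\hirelcat{0} \iota_X \leq \hirelcat{k+1} \iota_X$, as required.

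For the right inequality, I would again induct on $k$. The base case $k=0$ reads $\hirelcat{0}\iota_X \leq \hirelcat{0}\iota_X + 0$, which is trivial. For the inductive step, assume $\hirelcat{k} \iota_X \leq \hirelcat{0} \iota_X + k = \relcat \iota_X + k$. Then the right half of \thmref{relcatkplus1} gives
$$\hirelcat{k+1} \iota_X \leq \hirelcat{k} \iota_X + 1 \leq \relcat \iota_X + k + 1,$$
completing the induction. Since there is no genuine obstacle here, the whole proof is essentially a one-line appeal to \thmref{relcatkplus1} plus induction; the only thing to be careful about is to translate $\relcat \iota_X = \hirelcat{0} \iota_X$ explicitly, which is the content of the Remark following the definition of $\hirelcat{k}$.
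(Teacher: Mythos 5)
Your proof is correct and matches the paper's intent exactly: the paper states this as an immediate corollary of \thmref{relcatkplus1} (together with the identification $\relcat \iota_X = \hirelcat{0}\iota_X$ from the Remark), leaving the routine telescoping induction implicit, which is precisely what you have written out.
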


\begin{remark}As a consequence of \thmref{relcatkplus1} and  \cororef{relcatplusk}, if $n=\relcat \iota_X$, there are at most $n$ integers $k$ for which $\hirelcat{k+1} \iota_X = \relcatk \iota_X$.
\end{remark}
  
\begin{example}If $\iota_X$ is a homotopy equivalence, then $g_k$ is a homotopy equivalence for all $k$.  So $\relcatk \iota_X = k$ for all $k$.
\end{example}

\begin{example}Let $A \not\simeq *$  and consider the map $\iota_* \colon A \to *$. We have $\secat \iota_* = 0$ because $\iota_*$ has a (unique) section.
By Proposition \ref{secatplus}, $\relcatk \iota_* = k$ or $1+k$. 
Indeed, for any k, the map $\gamma_{k,k+1} \colon A \bowtie \dots \bowtie A \mbox{(k+1 times)} \to A \bowtie \dots \bowtie A \mbox{(k+2 times)}$ is homotopic to the null map, so $\sigma \circ g_k \simeq \gamma_{k,k+1}$ where $\sigma\colon * \to G_{k+1}(\iota_*)$.
But we cannot have $\relcatk \iota_* = k$ unless $g_k(\iota_*)\colon A \bowtie \dots \bowtie A  \mbox{(k+1 times)} \to *$ is  a homotopy equivalence.

For instance if $A$ is the Epstein's space (such that $A\not\simeq *$  but $\Sigma A \simeq *$), then $A \bowtie A \simeq \Sigma A \wedge A \simeq *$ and $g_k$ is a homotopy equivalence for all $k > 0$, so $\hirelcat{0} \iota_* = 1$ and $\relcatk \iota_* = k$ par all $k > 0$.
However if we chose a  simply-connected CW-complex $A$ (in order that $A \bowtie  \dots \bowtie A \not\simeq *$), then $\relcatk \iota_* = k+1$ for all $k$.

\end{example}

\begin{example}Consider any CW-complex $X$ with $\cat X = 1$  and the map $\iota_{X}\colon * \to X$. We have $\secat \iota_{X} = \relcat \iota_{X} = \cat {X} = 1$. Let us compute $\hicat{1} X = \hirelcat{1} \iota_X$. Notice that $G_1(X) \simeq \Sigma\Omega X$. By \thmref{relcatkplus1}, we know that $1 \leq \hicat{1} X \leq 2$. By the way, we can say that $\gamma_{1,2}\colon \Sigma\Omega X \to G_2(X)$ factorizes up to homotopy through $g_1\colon \Sigma\Omega X \to X$. But we cannot have  $\hicat{1} X = 1$ because $g_1$ is not a homotopy equivalence; so $\hicat{1} X  = 2$.
\end{example}

\begin{example}More generally, if $\relcat \iota_X = 1$, we have $k \leq \relcatk \iota_X \leq k+1$ for any $k$. So $\relcatk \iota_X =k+1$ unless $g_k(\iota_X)$ is a homotopy equivalence.
\end{example}

Let be given any map $\iota_X\colon A \to X$ with $\secat(\iota_X) \leq n$ and any homotopy section $\sigma\colon X \to G_n$ of $g_n\colon G_n\to X$.  Consider the following homotopy pullbacks:
$$\xymatrix{
Q\ar[d]_{\pi'}\ar[r]^\pi&G_k\ar[d]_{\theta_n^k}\ar@{=}[rd]\\
G_k\ar[r]_{\bar \sigma}\ar[d]_{g_k}&H_n^k\ar[r]_{\eta_n^k}\ar[d]_{h_n^k}&G_k\ar[d]^{g_k}\\
X\ar[r]_\sigma&G_n\ar[r]_{g_n}&X
}$$
where  $\theta_n^k = (\gamma_{k,n}, \id_{G_k})$ is the whisker map induced by the homotopy pullback $H_n^k$. By the Prism lemma, we know that the homotopy pullback of $\sigma$ and $h_n^k$ is indeed $G_k$, and that $\eta_n^k \circ \bar\sigma \simeq \id_{G_k}$.
Also notice that $\pi \simeq \pi'$ since $\pi \simeq \eta_n^k \circ \theta_n^k \circ \pi \simeq \eta_n^k \circ \bar\sigma \circ \pi' \simeq \pi'$.

\begin{proposition}\label{sigmabar}Let be given any map $\iota_X\colon A \to X$ with $\secat(\iota_X) \leq n$ and any homotopy section $\sigma\colon X \to G_n$ of $g_n\colon G_n\to X$.  With the same definitions and notations as above, the following conditions are equivalent:
\begin{enumerate}
\item $\sigma \circ g_k \simeq \gamma_{k,n}$.
\item $\pi$ has a homotopy section.
\item $\pi$ is a homotopy epimorphism.
\item $\theta_n^k \simeq \bar\sigma$.
\end{enumerate}
\end{proposition}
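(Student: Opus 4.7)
The plan is to prove the cycle $(i)\Leftrightarrow(iv)\Rightarrow(ii)\Rightarrow(iii)\Rightarrow(iv)$. The key observations are already essentially in the preamble of the proposition: both $\theta_n^k$ and $\bar\sigma$ are maps into the homotopy pullback $H_n^k$, both are sections of $\eta_n^k$, and the remark that $\pi\simeq\pi'$ is what will make the argument run.

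For $(i)\Leftrightarrow(iv)$, I would invoke the universal property of the homotopy pullback $H_n^k$. A map $G_k\to H_n^k$ is determined up to homotopy by its two projections under $\eta_n^k$ and $h_n^k$, together with a filling homotopy. Now $\eta_n^k\circ\theta_n^k\simeq \id_{G_k}\simeq \eta_n^k\circ\bar\sigma$, while $h_n^k\circ\theta_n^k\simeq\gamma_{k,n}$ and $h_n^k\circ\bar\sigma\simeq\sigma\circ g_k$. Hence $\theta_n^k\simeq\bar\sigma$ if and only if $\sigma\circ g_k\simeq\gamma_{k,n}$; the forward direction uses uniqueness of the whisker map, the reverse simply applies $h_n^k$.

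For $(iv)\Rightarrow(ii)$: when $\theta_n^k\simeq\bar\sigma$, the pair $(\id_{G_k},\id_{G_k})$ together with this homotopy determines, by the universal property of the homotopy pullback $Q$ of $\theta_n^k$ and $\bar\sigma$, a whisker map $s\colon G_k\to Q$ satisfying $\pi\circ s\simeq\id_{G_k}$. The implication $(ii)\Rightarrow(iii)$ is formal, since a map admitting a homotopy section is a split (and so a homotopy) epimorphism.

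The step I expect to be the main (though modest) obstacle is $(iii)\Rightarrow(iv)$, because it is the place where the identity $\pi\simeq\pi'$ established just before the statement is crucial. The commutativity of the top-left pullback square gives $\theta_n^k\circ\pi\simeq\bar\sigma\circ\pi'$, and substituting $\pi'\simeq\pi$ yields $\theta_n^k\circ\pi\simeq\bar\sigma\circ\pi$. If $\pi$ is a homotopy epimorphism in the sense that post-composition with $\pi$ is injective on homotopy classes out of $G_k$, we can cancel $\pi$ to conclude $\theta_n^k\simeq\bar\sigma$, which is $(iv)$. This closes the cycle.
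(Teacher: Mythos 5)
Your cycle $(iv)\Rightarrow(ii)\Rightarrow(iii)\Rightarrow(iv)$ is sound and matches the paper's treatment of those arrows: $(iv)\Rightarrow(ii)$ uses only the \emph{existence} of a whisker map into $Q$, $(ii)\Rightarrow(iii)$ is formal, and your $(iii)\Rightarrow(iv)$ is exactly the paper's cancellation argument via $\pi\simeq\pi'$. The direction $(iv)\Rightarrow(i)$ by post-composing with $h_n^k$ is also fine. The gap is in your claimed $(i)\Rightarrow(iv)$: you assert that a map $G_k\to H_n^k$ is determined up to homotopy by its two projections (plus a filling homotopy), i.e.\ you invoke a \emph{uniqueness} clause for whisker maps into a homotopy pullback. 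In the homotopy-commutative framework used here, homotopy pullbacks are only \emph{weakly} universal: a whisker map exists, but two maps whose projections are homotopic need not be homotopic --- the discrepancy can live in homotopy classes into the loop space of the corner, and condition $(i)$ hands you only a homotopy $\sigma\circ g_k\simeq\gamma_{k,n}$ with no compatibility with the structural homotopy of $H_n^k$. Indeed, if projections determined the map, $\theta_n^k\simeq\bar\sigma$ would follow far too easily, and Theorem~\ref{dimension} (which imposes connectivity and dimension hypotheses precisely to force two sections of $\eta_n^k$ to agree) would be pointless. Since $(i)\Rightarrow(iv)$ is the only arrow leaving $(i)$ in your scheme, condition $(i)$ is not shown to imply the others.

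The repair is the paper's route: from $(i)$ one gets $\sigma\circ g_k\simeq\gamma_{k,n}\simeq h_n^k\circ\theta_n^k\circ\id_{G_k}$, and the existence part of the universal property of $Q$ (pasting the two left-hand squares, $Q$ is the homotopy pullback of $\gamma_{k,n}$ and $\sigma$) yields a whisker map $(g_k,\id_{G_k})\colon G_k\to Q$ that is a homotopy section of $\pi$; this is $(i)\Rightarrow(ii)$, and your chain $(ii)\Rightarrow(iii)\Rightarrow(iv)\Rightarrow(i)$ then closes the loop using only existence of whisker maps and the epimorphism cancellation.
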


\begin{proof}We have the following sequence of implications:

(i) $\implies$ (ii):  Since $\sigma \circ g_k \simeq \gamma_{k,n}  \simeq h_n^k \circ \theta_n^k \circ \id_{G_k}$, we have a whisker map $(g_k,\id_{G_k})\colon {G_k}\to Q$ induced by the homotopy pullback $Q$ which is a homotopy section of $\pi$.

(ii) $\implies$ (iii): Obvious.

(iii) $\implies$ (iv): We have $\theta_n^k \circ \pi \simeq  \bar\sigma \circ \pi' \simeq  \bar\sigma \circ \pi$ since $\pi \simeq \pi'$. Thus  $\theta_n^k \simeq \bar\sigma$ since $\pi$ is a homotopy epimorphism.

(iv) $\implies$ (i): We have $\sigma \circ g_k \simeq h_n^k \circ \bar\sigma \simeq h_n^k \circ \theta_n^k \simeq \gamma_{k,n}$.
\end{proof}

\begin{theorem}\label{dimension}Let be   a $(q-1)$-connected map $\iota_X\colon A\to X$ with $\secat \iota_X = n$. If $G_k$ has the homotopy type of a CW-complex with $\dim G_k < (n + 1)q-1$ then $\sigma \circ g_k \simeq \gamma_{k,n}$ for any homotopy section $\sigma$ of $g_n$, so $\hirelcat{i} \iota_X = \secat \iota_X$ for all $i \leq k$.\end{theorem}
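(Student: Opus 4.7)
The plan is to reduce the statement, via \propref{sigmabar}, to showing that the two sections $\theta_n^k$ and $\bar\sigma$ of the fibration $\eta_n^k\colon H_n^k\to G_k$ are homotopic over $G_k$, and then to attack this via obstruction theory, the relevant input being the connectivity of the fibre of $\eta_n^k$.

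First I would identify the fibre. Since $H_n^k$ is built as the homotopy pullback of $g_k$ along $g_n$, the fibre $F$ of $\eta_n^k$ coincides with the fibre of $g_n\colon G_n\to X$. Now the Ganea construction realises $g_{i+1}$ as the fibrewise join $g_i\bowtie_X \iota_X$, so an easy induction using the fact that the fibre of a fibrewise join is the join of the fibres gives that $F$ is the $(n+1)$-fold join of the fibre $F_0$ of $\iota_X$. Since $\iota_X$ is $(q-1)$-connected, $F_0$ is $(q-2)$-connected, and the join formula ``$X*Y$ is $(c_X+c_Y+2)$-connected'' gives by iteration that $F$ is $((n+1)(q-2)+2n)$-connected, i.e.\ $((n+1)q-2)$-connected. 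Hence the first possibly nontrivial homotopy group of $F$ sits in dimension $(n+1)q-1$.

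Next I would run the standard cellular obstruction argument for a homotopy between two sections of a fibration: given $\theta_n^k$ and $\bar\sigma$, a fibrewise homotopy $H\colon G_k\times I\to H_n^k$ between them is constructed skeleton by skeleton on $G_k$, the obstruction to extending from the $(i-1)$- to the $i$-skeleton landing in $H^i(G_k;\pi_i(F))$ (with a local system if needed; vanishing of the cohomology groups is insensitive to this). By the previous paragraph these groups are zero for $i<(n+1)q-1$, and the hypothesis $\dim G_k<(n+1)q-1$ makes them zero for $i\geq (n+1)q-1$ as well. Thus all obstructions vanish and $\theta_n^k\simeq \bar\sigma$ in $H_n^k$ over $G_k$. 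By \propref{sigmabar}, this yields $\sigma\circ g_k\simeq \gamma_{k,n}$.

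Finally, once $\sigma\circ g_k\simeq \gamma_{k,n}$ is in hand, the definition of $\hirelcat{k}$ gives $\hirelcat{k}\iota_X\leq n=\secat \iota_X$, and the lower bound of \propref{secatplus} gives equality. For $i\leq k$, precomposing with $\gamma_{i,k}$ and using $g_k\circ \gamma_{i,k}\simeq g_i$ and $\gamma_{k,n}\circ\gamma_{i,k}\simeq\gamma_{i,n}$ yields $\sigma\circ g_i\simeq \gamma_{i,n}$, so the same section $\sigma$ witnesses $\hirelcat{i}\iota_X\leq n$, and equality again follows from \propref{secatplus}. The main obstacle is really the fibre-connectivity computation together with a careful citation/verification of the obstruction-theoretic vanishing statement; the rest is bookkeeping with the Ganea tower.
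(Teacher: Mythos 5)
Your proposal is correct and follows essentially the same route as the paper: reduce via \propref{sigmabar} to showing $\theta_n^k \simeq \bar\sigma$, and get this from the $((n+1)q-1)$-connectivity of $\eta_n^k$ (equivalently, the $((n+1)q-2)$-connectivity of its fibre, which is the $(n+1)$-fold join of the fibre of $\iota_X$) together with the dimension bound on $G_k$. The only cosmetic difference is that the paper concludes by citing Whitehead's theorem that an $r$-connected map induces a bijection $[K,H_n^k]\to[K,G_k]$ for $\dim K<r$, whereas you run the underlying obstruction-theoretic argument for a vertical homotopy of sections by hand; your closing bookkeeping for $i\leq k$ via precomposition with $\gamma_{i,k}$ is also correct.
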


\begin{proof}Recall that $g_i$ is the $(i+1)$-fold join of $\iota_X$. Thus by \cite{Mat76}, Theorem 47, we obtain that, for each $i \geq 0$, $g_i : G_i \to X$ is $(i+1)q-1$-connected. As $g_i$ and $\eta_i^k$ have the same homotopy fibre, the Five lemma implies that $\eta_i^k\colon H_i^k \to G_k$ is $(i+1)q-1$-connected, too. By \cite{Whi78}, Theorem IV.7.16, this means that for every CW-complex $K$ with $\dim K < (i+1)q-1$, $\eta_i^k$ induces a one-to-one correspondence $[K,H_i^k] \to [K,G_k]$. Apply this to $K\simeq G_k$ and $i=n$: Since $\theta_n^k$ and $\bar\sigma$ are both homotopy sections of $\eta_n^k$, we obtain $\theta_n^k \simeq \bar\sigma$, and \propref{sigmabar} gives the desired result.
\end{proof}

\begin{example}Let $X$ be the Eilenberg-Mac~Lane space $K(\mathbb{Q}, 1)$. It is known that $\cat(X) = 2$ and that $G_1(X) \simeq \Sigma\Omega X$ has the homotopy type of a wedge of circles (see \cite{CLOT03}, Example 1.9 and Remark 1.62 for instance). By \thmref{relcatkplus1}, we know that $2 \leq \hicat{1} X \leq 3$.  Because $\dim G_1(X) = 1 <  (\cat X+1)-1 = 2$, we have $\sigma \circ g_1 \simeq \gamma_{1,2}$ for any homotopy section $\sigma$ of $g_2(X)$ and $\hicat{1} X = 2$. Moreover $g_k$ is never a homotopy equivalence, so $\hicat{k} X > k$ for any $k$, so $\hicat{k} X = k+1$ for $k \geq 1$.
\end{example}

\bibliographystyle{plain}
\bibliography{secat}

\begin{thebibliography}{10}

\bibitem{CarGarVan14}
Jos{\'e}~Gabriel Carrasquel-Vera, Jos{\'e}~Manuel Garc{\'i}a-Calcines, and
  Lucile Vandembroucq.
\newblock Relative category and monoidal topological complexity.
\newblock {\em Topol. and its Appl.}, 171:41--53, 2014.

\bibitem{CLOT03}
Octavian Cornea, Gregory Lupton, John Oprea, and Daniel Tanr{\'e}.
\newblock {\em Lusternik-{S}chnirelmann category}, volume 103 of {\em
  Mathematical Surveys and Monographs}.
\newblock American Mathematical Society, Providence, RI, 2003.

\bibitem{Doe98}
Jean-Paul Doeraene.
\newblock Homotopy pull backs, homotopy push outs and joins.
\newblock {\em Bull. Belg. Math. Soc. Simon Stevin}, 5(1):15--38, 1998.

\bibitem{DoeHa13}
Jean-Paul Doeraene and Mohammed El~Haouari.
\newblock Up-to-one approximations for sectional category and topological
  complexity.
\newblock {\em Topology and its Appl.}, 160:766--783, 2013.

\bibitem{DoeHa13-2}
Jean-Paul Doeraene and Mohammed El~Haouari.
\newblock When does secat equal relcat ?
\newblock {\em Bull. of the Belgian Math. Soc.}, 20(4), 2013.

\bibitem{Far03}
Michael Farber.
\newblock Topological complexity of motion planning.
\newblock {\em Discrete Comput. Geom.}, 29:211--221, 2003.

\bibitem{Gan67}
Tudor Ganea.
\newblock Lusternik-schnirelmann category and strong category.
\newblock {\em Ill. J. Math.}, 11:417--427, 1967.

\bibitem{IwaSak10}
Norio Iwase and Michihiro Sakai.
\newblock Topological complexity is a fibrewise {LS}-category.
\newblock {\em Topology and its Application}, 157:10--21, 2010.

\bibitem{Mat76}
Michael Mather.
\newblock Pull-backs in homotopy theory.
\newblock {\em Canad. Journ. Math.}, 28(2):225--263, 1976.

\bibitem{Sva66}
Albert~S. Schwarz.
\newblock The genus of a fiber space.
\newblock {\em Amer. Math. Soc. Transl.}, 55:49--140, 1966.

\bibitem{Str11}
Jeffrey Strom.
\newblock {\em Modern Classical Homotopy Theory}, volume 127 of {\em Graduate
  Studies in Mathematics}.
\newblock American Mathematical Society, 2011.

\bibitem{Whi78}
George~W. Whitehead.
\newblock {\em Elements of homotopy theory}, volume~64 of {\em Graduate texts
  in mathematics}.
\newblock Springer-Verlag, New York, 1978.

\end{thebibliography}

\end{document}